\newtheorem{thm}{Theorem}[section]
\newtheorem{lem}[thm]{Lemma}
\newtheorem{rem}[thm]{Remark}
\newtheorem{cor}[thm]{Corollary}
\newtheorem{ex}[thm]{Example}
\newtheorem{defn}[thm]{Definition}
\numberwithin{equation}{section}
\title[A higher-order evolution equation with an inhomogeneous term]{Nonexistence results for a higher-order evolution equation with an inhomogeneous term depending on time and space}
\author[M. Jleli, N.-A. Lai, B. Samet]{Mohamed Jleli, Ning-An Lai, Bessem Samet}
\subjclass[2010]{35B44; 35B33}
\keywords{Inhomogeneous; higher-order evolution equation; nonexistence; first and second critical exponents}
\begin{document}

\maketitle

\begin{abstract}
We consider  a higher-order evolution equation with an inhomogeneous term depending on time and space.  We first derive a general criterion for the nonexistence of weak solutions. Next, we study the particular case when the inhomogeneity depends only on space. In that case, we obtain the first critical exponent in the sense of Fujita, as well as  the second critical exponent in the sense of Lee and Ni.
\end{abstract}

\section{Introduction}

In this paper, we  investigate  the questions of  existence and nonexistence of weak solutions to the inhomogeneous problem
\begin{equation}\label{P}
\square_k u = |u|^p + |\partial^{k-1}_{t} u|^q+ w(t,x)\quad \mbox{in}\quad (0,\infty)\times \mathbb{R}^N.
\end{equation}
Here $\square_k:=\partial^k_{t}-\Delta$ ($k\geq 2$), $\partial^i_{t}:=\frac{\partial}{\partial t^i}$, $p,q>1$, $N\geq 1$ and  $w\geq 0$ is a nontrivial $L^1_{loc}$ function. We mention below some motivations for studying problems of type \eqref{P}.

In the case $k=2$ and $w\equiv 0$, problem \eqref{P} reduces to
\begin{equation}\label{PHZ}
\square_2= |u|^p +|\partial_t u|^q\quad \mbox{in}\quad (0,\infty)\times \mathbb{R}^N.
\end{equation}
This problem can be considered as a natural combination of the problem
\begin{equation}\label{PH}
\square_2 u  = |u|^p\quad \mbox{in}\quad (0,\infty)\times \mathbb{R}^N
\end{equation}
and the problem
\begin{equation}\label{PG}
\square_2 u = |\partial_t u|^q\quad \mbox{in}\quad (0,\infty)\times \mathbb{R}^N.
\end{equation}
John \cite{J} proved that  problem \eqref{PH} in $\mathbb{R}^3$ admits as critical exponent $p_c(3):=1+\sqrt{2}$, in the sense that: when $1< p < p_c(3)$ the solution blows up in a finite time, while for $p>p_c(3)$, there exists global solution. Strauss \cite{S}  conjectured that for all $N\geq 2$, problem \eqref{PH} admits as critical exponent the real number $p_c(N)$, which is the positive root of the equation
$$
(N-1)p^2-(N+1)p-2=0,
$$
that is,
$$
p_c(N)=\frac{(N+1)+\sqrt{N^2+10N-7}}{2(N-1)}.
$$
Later, several  mathematicians have contributed to solve this conjecture, see e.g. \cite{GE,G1,G2,JI,LA,L,RA,SC,SI,TW,Y,ZH,ZH2}. For problem \eqref{PG}, it was conjectured (Glassey conjecture) that the critical exponent is given by
$$
q_c(N):=1+\frac{2}{N-1}.
$$
In the case $q\leq q_c(N)$, the nonexistence of  global small solutions  for problem \eqref{PG} was investigated by several authors, see e.g.
\cite{A,J2,RAA,SC2,ZHH}. In the case $q>q_c(N)$, the existence of global small solutions has been established in \cite{HT,SI2,T} for $N\in\{2,3\}$, and    in \cite{HWY} for $N\geq 4$, under the radial assumption of the initial data. In \cite{HZ}, Han and Zhou investigated the blow-up phenomenon for problem \eqref{PHZ}. Namely, they obtained blow-up results when $p>p_c(N)$, $q>q_c(N)$ and $(p-1)((N-1)q-2)< 4$. Moreover, in certain cases, they obtained an upper bound of the lifespan. In \cite{HWY2}, Hidano,  Wang and Yokoyama investigated problem \eqref{PHZ} in the case $N\in\{2,3\}$. Namely, they determined the full region of $(p,q)$ for which  there is  global existence of small solutions. Moreover, a sharp lower bound of the lifespan was obtained for many $(p,q)$ when there is no global existence.

In \cite{Zhang}, Zhang studied the inhomogeneous semilinear wave equation
\begin{equation}\label{PZhang}
\square_2 u = |u|^p +  w(x)\quad \mbox{in}\quad (0,\infty)\times \mathbb{R}^N,
\end{equation}
where $w\geq 0$ is a nontrivial $L^1_{loc}$ function. He proved that, if one assumes that stationary solutions are global solutions, then the critical exponent for problem \eqref{PZhang} is $p^*=\frac{N}{N-2}$, $N\geq 3$. Namely, he showed that, when $1<p<p^*$, then problem \eqref{PZhang} possesses no global solutions for any initial values; when $p>p^*$, then
problem \eqref{PZhang}  has  global solutions (more precisely, stationary solutions) for some $w>0$ and suitable initial values.  Note that, in the blow-up case,  no assumptions on the sign or decay of the initial values were supposed.

In \cite{MP}, Mitidieri and Pohozaev considered the inhomogeneous exterior problem

\begin{eqnarray}\label{PMP}
\left\{\begin{array}{lllll}
\square_k u &\geq & |u|^p +  w(x) &\mbox{in}&  (0,\infty)\times \Omega,\\
u &\geq & 0 &\mbox{on}&  (0,\infty)\times \partial \Omega,\\
\partial^{k-1}_t u(0,x) &\geq &0  &\mbox{in}&   \Omega,
\end{array}
\right.
\end{eqnarray}
where $\Omega$ is the exterior of a ball of center $0$ and radius $R>0$, and $w\geq 0$ is a nontrivial $L^1_{loc}$ function. Using the test function method, they proved that, if $1<p<p^*$, then problem \eqref{PMP} admits no weak solutions.

As far as we know, problems of type \eqref{P}  were not considered previously in the literature. Before stating  the main results related to problem \eqref{P}, let us mention in which sense solutions  are considered. Just before, let us fix some notations.  Denote
$$
Q=(0,\infty)\times\mathbb{R}^N.
$$
By $C_c^2(Q)$, we mean  the  space of  $C^2$ real valued functions compactly supported in  $Q$.  Let
\begin{equation}\label{pstar}
p^*(N)=\left\{\begin{array}{lll}
\infty &\mbox{if}& N\in\{1,2\},\\
\frac{N}{N-2}&\mbox{if}& N\geq 3.
\end{array}
\right.
\end{equation}

\begin{defn}
We say that $(u,\partial^{k-1}_{t} u)\in L^p_{loc}(Q)\times L^q_{loc}(Q)$ is a  weak solution to problem \eqref{P}, if for any $\varphi\in C_c^2(Q)$, there holds
\begin{equation}\label{ws}
\int_Q \left(|u|^p\varphi+|\partial^{k-1}_{t} u|^q+w(t,x)\right)\varphi\,dx\,dt
=-\int_Q \partial^{k-1}_t u\,\partial_t \varphi\,dx\,dt-\int_Q u \Delta \varphi\,dx\,dt.
\end{equation}
\end{defn}

Our first main theorem provides a general nonexistence result for problem \eqref{P}.

\begin{thm}\label{T1}
Let $w\in L^1_{loc}(Q)$, $w\geq 0$.
Suppose that there exist $0<c_1<c_2<1$  such that
\begin{equation}\label{gbl}
\limsup_{T\to \infty} T^{\frac{q}{q-1}\left[1-\frac{N(p-1)}{2p}\right]-1} \int_{c_1T}^{c_2T} \int_{0< |x|< T^{\frac{q(p-1)}{2p(q-1)}}} w(t,x)\,dx\,dt =+\infty.
\end{equation}
Then problem \eqref{P} admits no  weak solutions.
\end{thm}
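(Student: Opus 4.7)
The plan is to use the classical Mitidieri--Pohozaev test function method. Argue by contradiction: suppose $(u,\partial_t^{k-1}u)$ is a weak solution to \eqref{P}. For each large parameter $T>0$, set $R:=T^{q(p-1)/(2p(q-1))}$ and insert into the weak identity \eqref{ws} the nonnegative, separated test function
$$\varphi_T(t,x)=\Phi\!\left(\tfrac{t}{T}\right)^\ell \Psi\!\left(\tfrac{|x|}{2R}\right)^\ell,$$
where $\Phi,\Psi\in C^\infty(\mathbb{R})$ are fixed cutoffs with $\Phi\equiv 1$ on $[c_1,c_2]$ and $\operatorname{supp}\Phi\subset(0,1)$ (available since $0<c_1<c_2<1$), $\Psi\equiv 1$ on $[0,1/2]$ and $\operatorname{supp}\Psi\subset[0,1)$, and the exponent $\ell$ is taken larger than $\max\{q',p'\}$ (with $q':=q/(q-1)$, $p':=p/(p-1)$) so that $\varphi_T\in C_c^2(Q)$ and the ``dual'' quotients $|\partial_t\varphi_T|^{q'}\varphi_T^{1-q'}$ and $|\Delta\varphi_T|^{p'}\varphi_T^{1-p'}$ remain pointwise bounded on $\mathrm{supp}\,\varphi_T$.

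Applying Young's inequality on the right-hand side of \eqref{ws},
$$|\partial_t^{k-1}u\,\partial_t\varphi_T|\le\tfrac12|\partial_t^{k-1}u|^q\varphi_T+C\,\frac{|\partial_t\varphi_T|^{q'}}{\varphi_T^{q'-1}},\quad |u\,\Delta\varphi_T|\le\tfrac12|u|^p\varphi_T+C\,\frac{|\Delta\varphi_T|^{p'}}{\varphi_T^{p'-1}},$$
the terms $|u|^p\varphi_T$ and $|\partial_t^{k-1}u|^q\varphi_T$ are absorbed into the left side; discarding the remaining nonnegative contributions leaves
$$\int_Q w\,\varphi_T\,dx\,dt\le C\!\left(\int_Q\frac{|\partial_t\varphi_T|^{q'}}{\varphi_T^{q'-1}}\,dx\,dt+\int_Q\frac{|\Delta\varphi_T|^{p'}}{\varphi_T^{p'-1}}\,dx\,dt\right).$$
A direct rescaling in $t$ and $x$ gives bounds of order $T^{1-q'}R^N$ for the first error integral and of order $T\,R^{N-2p'}$ for the second. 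The particular exponent $R=T^{q'/(2p')}=T^{q(p-1)/(2p(q-1))}$ is precisely the one that forces these two bounds to coincide, each being of order $T^{1-q'[1-N(p-1)/(2p)]}$.

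Since $\varphi_T\equiv 1$ on $[c_1T,c_2T]\times\{|x|\le R\}$ and $w\ge 0$, the previous display yields
$$T^{\frac{q}{q-1}\left[1-\frac{N(p-1)}{2p}\right]-1}\int_{c_1T}^{c_2T}\int_{0<|x|<R}w(t,x)\,dx\,dt\le C$$
uniformly in $T>0$, which contradicts \eqref{gbl} upon passing to $\limsup_{T\to\infty}$. The only mildly delicate step is the selection of the exponent $\ell$ to ensure that the quotients with $\varphi_T^{q'-1}$ and $\varphi_T^{p'-1}$ in the denominator stay bounded where $\varphi_T$ vanishes — handled in the standard way by writing $\varphi_T$ as a large power of smooth cutoffs. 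Beyond that, the algebra pinning down the scale $R$ is rigid but routine: it is the unique choice balancing the temporal and spatial error integrals against the scale-invariant exponent in \eqref{gbl}.
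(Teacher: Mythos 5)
Your proposal is correct and follows essentially the same route as the paper's proof: the same rescaled test function $\varphi_T$ supported in $(0,T)\times\{|x|\lesssim T^{\theta}\}$ with $\theta=\frac{q(p-1)}{2p(q-1)}$, the same Young-inequality absorption of the two nonlinear terms, and the same balancing of the two error integrals to reach the bound that contradicts \eqref{gbl}. The only nit is the threshold for $\ell$: since $\Delta\varphi_T$ loses two powers of the spatial cutoff, boundedness of $|\Delta\varphi_T|^{p'}\varphi_T^{1-p'}$ requires $\ell\geq 2p'$, not merely $\ell>\max\{q',p'\}$ (the paper takes $\ell=\max\left\{\frac{q}{q-1},\frac{2p}{p-1}\right\}$), but this is exactly the ``take $\ell$ large'' point you already flag and does not affect the argument.
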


\begin{rem}
{\rm{(i)}} Note  that  no conditions on the sign or decay of the initial values
$\partial^i_t u(0,x)$, $i=0,1,\cdots,k-1$, are imposed in Theorem \ref{T1}. \\
\smallskip
{\rm{(ii)}} Observe that condition \eqref{gbl} is independent of $k$.
\end{rem}

In the particular case $w=f(t)g(x)$, one deduces from Theorem \ref{T1} the following nonexistence result.

\begin{cor}\label{cor1}
Let $w=f(t)g(x)$, where $f\in L^1_{loc}((0,\infty))$, $g\in L^1_{loc}(\mathbb{R}^N)$, $f,g\geq 0$ and $g\not\equiv 0$. Suppose that  there exist $0<c_1<c_2<1$ such that
\begin{equation}\label{blu}
\limsup_{T\to \infty} T^{\frac{q}{q-1}\left[1-\frac{N(p-1)}{2p}\right]-1}  \int_{c_1T}^{c_2T} f(t)\,dt =+\infty.
\end{equation}
Then problem \eqref{P} admits no  weak solutions.
\end{cor}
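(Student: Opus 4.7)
The plan is to deduce Corollary \ref{cor1} directly from Theorem \ref{T1} by verifying that the separated-variable hypothesis \eqref{blu} forces the general hypothesis \eqref{gbl} when $w(t,x)=f(t)g(x)$. First I would plug this product form into the double integral appearing in \eqref{gbl} and apply Tonelli's theorem (the integrand is non-negative) to factor it as
\[
\int_{c_1T}^{c_2T}\!\!\int_{0<|x|<T^{\frac{q(p-1)}{2p(q-1)}}}\! f(t)g(x)\,dx\,dt \;=\; \left(\int_{c_1T}^{c_2T} f(t)\,dt\right)\!\left(\int_{0<|x|<T^{\frac{q(p-1)}{2p(q-1)}}} g(x)\,dx\right).
\]

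The second step is to extract a uniform positive lower bound for the spatial factor. Since $g\in L^1_{loc}(\mathbb{R}^N)$ satisfies $g\geq 0$ and $g\not\equiv 0$, there exists some radius $R_0>0$ such that $C_0:=\int_{|x|<R_0}g(x)\,dx>0$; the origin has Lebesgue measure zero, so this integral equals $\int_{0<|x|<R_0}g(x)\,dx$. Because $p,q>1$, the exponent $\frac{q(p-1)}{2p(q-1)}$ is strictly positive, so $T^{\frac{q(p-1)}{2p(q-1)}}\geq R_0$ for all $T$ sufficiently large. Monotonicity of the integral of the non-negative function $g$ then gives $\int_{0<|x|<T^{\frac{q(p-1)}{2p(q-1)}}} g(x)\,dx \geq C_0$ for such $T$.

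Putting the two pieces together, for all sufficiently large $T$ one obtains
\[
T^{\frac{q}{q-1}\left[1-\frac{N(p-1)}{2p}\right]-1}\!\!\int_{c_1T}^{c_2T}\!\!\int_{0<|x|<T^{\frac{q(p-1)}{2p(q-1)}}}\! w(t,x)\,dx\,dt \;\geq\; C_0\, T^{\frac{q}{q-1}\left[1-\frac{N(p-1)}{2p}\right]-1}\!\!\int_{c_1T}^{c_2T}\! f(t)\,dt.
\]
Taking $\limsup_{T\to\infty}$ and invoking \eqref{blu} shows that the left-hand side diverges to $+\infty$, which is precisely \eqref{gbl}. Theorem \ref{T1} then rules out the existence of any weak solution, finishing the argument. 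This is a pure bookkeeping reduction and I do not anticipate a genuine technical obstacle; the only point worth flagging is that the nontriviality of $g$ combined with the strict positivity of the exponent $\frac{q(p-1)}{2p(q-1)}$ is what guarantees that the expanding spatial ball eventually captures a region of positive $g$-mass, making the factorization argument effective.
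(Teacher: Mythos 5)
Your proposal is correct and follows essentially the same route as the paper: factor the double integral by Tonelli, bound the spatial factor below by a positive constant once the ball $|x|<T^{\frac{q(p-1)}{2p(q-1)}}$ is large enough, and invoke Theorem \ref{T1}. In fact your choice of a radius $R_0$ with $\int_{|x|<R_0}g>0$ is slightly more careful than the paper, which bounds below by $\int_{0<|x|<1}g\,dx$ and tacitly assumes this is positive.
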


\begin{ex}
Let $f\in L^1_{loc}((0,\infty))$, $f\geq 0$,  be a  function satisfying, for some $C>0$ and $-1+\frac{1}{q}<\sigma<-1+\frac{N}{2}$,
$$
f(t)\geq C t^{\frac{q\sigma}{q-1}},
$$
for $t$  sufficiently large. Let $w=f(t)g(x)$, where $g\in L^1_{loc}(\mathbb{R}^N)$,  $g\geq 0$ and $g\not\equiv 0$. For $T$ sufficiently large, one has
$$
\int_{\frac{T}{2}}^T f(t)\,dt\geq C(\sigma,q) T^{1+\frac{q\sigma}{q-1}},
$$
where $C(\sigma,q)>0$ is a constant that depends only of $\sigma$ and $q$.
Hence, one obtains
$$
T^{\frac{q}{q-1}\left[1-\frac{N(p-1)}{2p}\right]-1}  \int_{\frac{T}{2}}^T f(t)\,dt\geq C(\sigma,q) T^{\frac{q}{q-1}\left[\sigma+1-\frac{N(p-1)}{2p}\right]}.
$$
Therefore, by Corollary \ref{cor1}, one deduces that, if
$$
1<p<\frac{N}{N-2(\sigma+1)}\quad\mbox{and}\quad  q>\max\left\{1,\frac{2}{N}\right\},
$$
then problem \eqref{P} admits no  weak solutions.
\end{ex}

In the case $w=g(x)$, the next theorem provides the critical exponent  for problem \eqref{P} in the sense of Fujita \cite{Fujita}.

\begin{thm}[First critical exponent]\label{T2}
{\rm{(I)}} Let $w=g(x)$,  $g\in L^1_{loc}(\mathbb{R}^N)$, $g\geq 0$, $g\not\equiv 0$, $q>1$ and  $1<p<p^*(N)$, where $p^*(N)$ is given by \eqref{pstar}. Then problem \eqref{P} admits no weak solutions.\\
\smallskip
{\rm{(II)}} If $N\geq 3$ and $p>p^*(N)$, then for all $q>1$, problem \eqref{P} has global positive solutions for some $w=g(x)>0$ and  suitable initial values.
\end{thm}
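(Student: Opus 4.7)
The two parts are essentially independent.

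For \textit{Part (I)}, my plan is to invoke Theorem \ref{T1} directly. With $w(t,x)=g(x)$ independent of time, the inner time integral in \eqref{gbl} contributes only a factor $(c_2-c_1)T$, so the quantity whose limsup one must test becomes
$$
(c_2-c_1)\, T^{\frac{q}{q-1}\left[1 - \frac{N(p-1)}{2p}\right]} \int_{0<|x|<T^{q(p-1)/(2p(q-1))}} g(x)\,dx.
$$
A short algebraic check shows that $1<p<p^*(N)$ is equivalent to $1-N(p-1)/(2p)>0$ (handling $N\leq 2$ and $N\geq 3$ in one stroke), so the $T$-exponent is strictly positive. Since $g\geq 0$, $g\not\equiv 0$ and the radius $T^{q(p-1)/(2p(q-1))}\to\infty$, the spatial integral is eventually bounded below by a positive constant; hence \eqref{gbl} holds and Theorem \ref{T1} yields nonexistence. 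This part is essentially a sign verification.

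For \textit{Part (II)}, the plan is to exhibit an explicit positive stationary solution. Any $u=u(x)>0$ solving $-\Delta u = u^p + g(x)$ in $\mathbb{R}^N$ with $g\geq 0$, $g\not\equiv 0$ in $L^1_{loc}$ yields a global positive solution of \eqref{P} by taking $u(0,\cdot)=u$ and $\partial_t^i u(0,\cdot)\equiv 0$ for $1\leq i\leq k-1$; the weak-solution identity \eqref{ws} then reduces to classical integration by parts in $x$ since all positive time derivatives of $u$ vanish. I would try the ansatz
$$
u(x) = A\bigl(1+|x|^2\bigr)^{-\gamma/2}, \qquad \gamma := \frac{2}{p-1},
$$
with $A>0$ to be chosen small. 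A direct radial computation gives
$$
-\Delta u - u^p = (1+|x|^2)^{-\gamma/2-2}\Bigl\{\bigl(A\gamma N-A^p\bigr) + \bigl(A\gamma(N-\gamma-2)-A^p\bigr)|x|^2\Bigr\}.
$$
The hypothesis $p>p^*(N)=N/(N-2)$ translates \emph{exactly} into $\gamma<N-2$, and hence $N-\gamma-2>0$. Choosing $A$ so that $A^{p-1}<\gamma(N-\gamma-2)$ makes both brackets strictly positive (the first follows from the second since $\gamma(N-\gamma-2)<\gamma N$), producing a smooth, strictly positive, locally integrable $g(x):=-\Delta u-u^p$, as required.

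The only step requiring a little foresight is discovering that $\gamma=2/(p-1)$ is the right power; this emerges from matching the decay rates of $-\Delta u$ and $u^p$ at infinity (forcing $\gamma p=\gamma+2$, so that the leading $|x|^2$ coefficient in $-\Delta u - u^p$ does not vanish while remaining of the correct sign). Once this exponent and the admissibility condition on $A$ are identified, the rest is mechanical, and I do not anticipate any genuine technical obstacle in either part.
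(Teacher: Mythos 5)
Your proposal is correct. For Part (I) you do exactly what the paper does: the paper invokes its Corollary \ref{cor1} with $f\equiv 1$, which amounts to the same computation you perform directly from Theorem \ref{T1} --- the time integral contributes a factor $(c_2-c_1)T$, the net $T$-exponent is $\frac{q}{q-1}\bigl[1-\frac{N(p-1)}{2p}\bigr]$, and its positivity is equivalent to $1<p<p^*(N)$ (automatically for $N\le 2$). Your observation that the spatial integral is eventually bounded below by a positive constant because $g\ge 0$, $g\not\equiv 0$ and the radius tends to infinity is the same point the paper makes in proving Corollary \ref{cor1}. For Part (II) you take a genuinely different, and arguably better, route: the paper simply cites the Baras--Pierre existence result for $-\Delta u=u^p+g$ with $p>\frac{N}{N-2}$ and some $g>0$, whereas you construct the supersolution pair explicitly via $u=A(1+|x|^2)^{-\gamma/2}$ with $\gamma=\frac{2}{p-1}$ and $A^{p-1}<\gamma(N-\gamma-2)$. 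Your computation of $-\Delta u-u^p$ is correct, the condition $p>p^*(N)$ is indeed exactly $\gamma<N-2$, and both coefficients in the bracket are positive under your smallness condition on $A$, so $g:=-\Delta u-u^p$ is smooth and strictly positive. What your approach buys is self-containedness and an explicit $g$; it is in fact the borderline case $\delta=a^*-2=\frac{2}{p-1}$ of the very construction the paper itself uses later to prove Theorem \ref{T3}(II), so it is entirely in the spirit of the paper. Your remark that a stationary solution satisfies the weak formulation \eqref{ws} with all time derivatives vanishing is the same (implicit) step the paper takes.
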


\begin{rem}
{\rm{(i)}} In this paper, we are concerned essentially with blow-up results. Comparing with the delicate contributions on global existence on the homogeneous case, the existence result (II) in Theorem \ref{T2} (same remark for the existence result (II) in Theorem \ref{T3}) is just a consequence of  elliptic results. We hope this can be improved in a future work.\\
\medskip
{\rm{(ii)}}  If one excludes stationary solutions as global solutions, then it is not certain that the exponent  $p^*(N)$  is still critical.\\
\medskip
{\rm{(iii)}} We do not know whether the exponent $p^*(N)$  belongs to the blow-up case or not.
\end{rem}

Further, for  $a<N$, we define the sets

$$
\mathbb{I}_a=\left\{g\in C(\mathbb{R}^N)\big| g(x)\geq 0,\, g(x)\geq C |x|^{-a} \mbox{ for }|x|\mbox{ large}\right\}
$$
and
$$
\mathbb{J}_a=\left\{g\in C(\mathbb{R}^N)\big| g(x)>0,\, g(x)\leq C |x|^{-a} \mbox{ for }|x|\mbox{ large}\right\},
$$
where $C>0$ is a  constant (independent of $x$).

Denote
$$
a^*=\frac{2p}{p-1}.
$$
The next result provides the second critical exponent for problem \eqref{P}
in the sense of Lee and Ni \cite{LN}.

\begin{thm}[Second critical exponent]\label{T3}
Let $N\geq 3$, $q>1$ and $p>p^*(N)$.\\
\medskip
{\rm{(I)}} If  $a<a^*$ and $w=g(x)\in \mathbb{I}_a$, then  problem \eqref{P} admits no  weak solutions.\\
\smallskip
{\rm{(II)}} If $a^*\leq a<N$, then  problem \eqref{P} has    positive global solutions for some $w=g(x)\in \mathbb{J}_a$ and  suitable initial values.
\end{thm}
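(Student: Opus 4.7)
The plan is to reduce to Theorem~\ref{T1}. Since $w=g(x)$ is time-independent, the $t$-integral in \eqref{gbl} contributes only a factor of $(c_2-c_1)T$, so \eqref{gbl} simplifies to
\[
\limsup_{T\to\infty} T^{\frac{q}{q-1}\left[1-\frac{N(p-1)}{2p}\right]} \int_{0<|x|<R(T)} g(x)\,dx = +\infty,
\]
where $R(T):=T^{\frac{q(p-1)}{2p(q-1)}}$ and the constant $(c_2-c_1)$ has been absorbed. The assumption $g\in\mathbb{I}_a$ gives $g(x)\geq C|x|^{-a}$ for $|x|$ large; passing to spherical coordinates (legitimate because $a<a^*<N$, the strict inequality $a^*<N$ being equivalent to $p>p^*(N)$) yields $\int_{0<|x|<R(T)} g(x)\,dx\gtrsim R(T)^{N-a}$ as $T\to\infty$. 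Collecting powers of $T$ gives
\[
\frac{q}{q-1}\left[1-\frac{N(p-1)}{2p}\right]+(N-a)\frac{q(p-1)}{2p(q-1)} \;=\; \frac{q}{q-1}\left[1-\frac{a(p-1)}{2p}\right],
\]
which is strictly positive precisely because $a<a^*=\frac{2p}{p-1}$. Hence \eqref{gbl} holds and Theorem~\ref{T1} rules out weak solutions.

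\textbf{Part (II).} I would look for time-independent positive classical solutions; since $\partial_t^i u\equiv 0$ for all $i\geq 1$, problem \eqref{P} then reduces to the elliptic equation $-\Delta u = u^p + g(x)$ on $\mathbb{R}^N$, and integration by parts in the $x$-variable shows that any such stationary solution is automatically a weak solution in the sense of \eqref{ws}. I take the ansatz $\overline u(x)=\varepsilon(1+|x|^2)^{-\beta/2}$ with $\beta=\frac{2}{p-1}$, tuned so that $\beta p=\beta+2=a^*$. A direct computation gives
\[
-\Delta\overline u \;=\; \varepsilon\beta\,(1+|x|^2)^{-\beta/2-2}\Bigl[(\beta+2)+(N-\beta-2)(1+|x|^2)\Bigr],
\]
and in particular $-\Delta\overline u\geq \varepsilon\beta(N-\beta-2)(1+|x|^2)^{-\beta/2-1}$ on $\mathbb{R}^N$, where $N-\beta-2>0$ precisely because $p>p^*(N)$. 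Since $\overline u^{\,p}=\varepsilon^p(1+|x|^2)^{-\beta/2-1}$, choosing $\varepsilon$ small enough produces $-\Delta\overline u-\overline u^{\,p}\geq c_0\varepsilon(1+|x|^2)^{-a^*/2}$ for some $c_0>0$. I then set $g(x):=c_0\varepsilon(1+|x|^2)^{-a/2}$, which lies in $\mathbb{J}_a$ because $a<N$, and since $a\geq a^*$ we have $g(x)\leq c_0\varepsilon(1+|x|^2)^{-a^*/2}$, making $\overline u$ a classical supersolution. With the trivial subsolution $\underline u\equiv 0$, standard monotone iteration $-\Delta u_{n+1}=u_n^p+g$ (starting from $u_0=0$) produces a nondecreasing sequence bounded above by $\overline u$ whose limit $u\in C^2(\mathbb{R}^N)$ solves the elliptic equation; then $(t,x)\mapsto u(x)$ is a positive global solution to \eqref{P} with initial data $u(0,x)=u(x)$, $\partial_t^i u(0,x)=0$ for $1\leq i\leq k-1$.

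\textbf{Main obstacle.} Part (I) is essentially bookkeeping once Theorem~\ref{T1} is in hand: the identity $a^*=\frac{2p}{p-1}$ is exactly the value at which the algebraic exponent in \eqref{gbl} changes sign. The substantive content sits in the supersolution construction of Part (II), which requires two delicate balances to hold simultaneously. The decay rates of $-\Delta\overline u$ and $\overline u^{\,p}$ must match at infinity (forcing $\beta=\frac{2}{p-1}$), and the leading coefficient of $-\Delta\overline u$ must be positive (forcing $p>p^*(N)$); only then is there room to absorb the inhomogeneity $g$, which is possible precisely when its decay exponent $a$ reaches the critical scale $a^*$. Convergence of the monotone iteration on the unbounded domain $\mathbb{R}^N$ is standard given the uniform barrier $\overline u$, but deserves a brief regularity check.
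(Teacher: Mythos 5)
Your Part (I) is correct and is essentially the paper's argument: reduce to Theorem \ref{T1}, use $g\geq C|x|^{-a}$ to get $\int_{0<|x|<T^{\theta}}g\,dx\gtrsim T^{\theta(N-a)}$ with $\theta=\frac{q(p-1)}{2p(q-1)}$, and check that the resulting exponent $\frac{q}{q-1}\left(1-\frac{a}{a^*}\right)$ is positive exactly when $a<a^*$; the paper integrates over the annulus $\frac{1}{2}T^{\theta}<|x|<T^{\theta}$ rather than the full ball, which sidesteps the fact that the lower bound on $g$ is only valid for $|x|$ large, but since $g\geq 0$ your version gives the same lower bound and this is cosmetic.

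Part (II) is also correct but takes a genuinely different route. The paper uses the ansatz $u(x)=\varepsilon(1+|x|^2)^{-\delta/2}$ with an \emph{adjustable} exponent $\delta\in[a-2,\,N-2)$ and simply \emph{defines} $g:=-\Delta u-u^p$; the constraint $\delta\geq a-2$ (which implies $\delta\geq\frac{2}{p-1}$ because $a\geq a^*$) makes $g>0$ everywhere and gives the decay $g\lesssim |x|^{-\delta-2}\leq C|x|^{-a}$, so $u$ is an \emph{exact} stationary solution and no PDE needs to be solved. You instead freeze $\beta=\frac{2}{p-1}$ (the minimal admissible value, $\beta=a^*-2$), which forces the residual $-\Delta\overline u-\overline u^{\,p}$ to decay only like $|x|^{-a^*}$ — too slowly to lie in $\mathbb{J}_a$ when $a>a^*$ — so you must prescribe a faster-decaying $g$ separately, downgrade $\overline u$ to a supersolution, and invoke monotone iteration with the Newtonian potential to manufacture an actual solution. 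Your computations ($\beta p=\beta+2=a^*$, positivity of $N-\beta-2$ from $p>p^*(N)$, absorption of $g$ for $a\geq a^*$) are all right, and the iteration does close given the barrier $\overline u$, but it imports nontrivial machinery (representation of $\overline u$ as a Newtonian potential, convergence and elliptic regularity on an unbounded domain) that the paper's choice of $\delta$ renders entirely unnecessary. If you let the profile exponent depend on $a$ as the paper does, your supersolution becomes an exact solution and Part (II) reduces to two elementary pointwise estimates.
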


The rest of the paper is organized as follows. In Section \ref{sec2}, we provide some preliminary estimates that will be used later in the proofs of our main results. In Section \ref{sec3}, we prove Theorem \ref{T1} and Corollary \ref{cor1}.  In Section \ref{sec4}, we prove Theorems \ref{T2} and \ref{T3}.

\section{Preliminary estimates}\label{sec2}

Given $0<c_1<c_2<1$, let $\eta,\xi \in C^\infty([0,\infty))$ be two functions satisfying
$$
\eta\geq 0, \quad  \mbox{supp}(\eta) \subset (0,1),\quad \eta(t)=1,\, c_1\leq t\leq c_2
$$
and
$$
0\leq \xi\leq 1,\quad
\xi(\sigma)=\begin{cases}
1 &\text{if } 0\leq \sigma\leq 1,\\
0 &\text{if } \sigma \geq 2.
\end{cases}
$$
For $T>0$, let
$$
\phi_T(t,x)=\eta\left(\frac{t}{T}\right)^\ell \xi\left(\frac{|x|^2}{T^{2\theta}}\right)^\ell:=\lambda_T(t)\mu_T(x),\quad (t,x)\in (0,\infty)\times \mathbb{R}^N,
$$
where $\ell,\theta>0$ are to be chosen.

\begin{lem}\label{L1}
Let $m>1$ and $\ell\geq \frac{m}{m-1}$. There holds
$$
\int_{Q} \phi_T^{\frac{-1}{m-1}} |\partial_t \phi_T|^{\frac{m}{m-1}}\,dx\,dt\leq C T^{1+N\theta -\frac{m}{m-1}},\quad T>0.
$$
Here and below, $C$ is a positive constant (independent of $T$), whose value may change from line to line.
\end{lem}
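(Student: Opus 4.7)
The proof is a direct computation exploiting the product structure $\phi_T(t,x)=\lambda_T(t)\mu_T(x)$ with $\lambda_T(t)=\eta(t/T)^\ell$ and $\mu_T(x)=\xi(|x|^2/T^{2\theta})^\ell$. Since $\mu_T$ is independent of $t$, the plan is to write $\partial_t\phi_T = \mu_T(x)\lambda_T'(t)$, use the chain rule to obtain $\lambda_T'(t)=\frac{\ell}{T}\eta(t/T)^{\ell-1}\eta'(t/T)$, and then multiply by $\phi_T^{-1/(m-1)}=\lambda_T^{-1/(m-1)}\mu_T^{-1/(m-1)}$ and carefully collect exponents.

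When I raise $|\partial_t\phi_T|$ to the power $m/(m-1)$ and multiply by $\phi_T^{-1/(m-1)}$, the powers of $\xi(|x|^2/T^{2\theta})$ combine to give $\xi(|x|^2/T^{2\theta})^{\ell m/(m-1)-\ell/(m-1)}=\mu_T(x)$, while the powers of $\eta(t/T)$ combine to $\eta(t/T)^{(\ell-1)m/(m-1)-\ell/(m-1)}=\eta(t/T)^{\ell-m/(m-1)}$. The key observation is that this last exponent is nonnegative precisely because of the hypothesis $\ell\geq m/(m-1)$; this is where the assumption enters, ensuring that the singularity of $\phi_T^{-1/(m-1)}$ near $\partial\operatorname{supp}(\eta)$ is absorbed by the vanishing of $|\partial_t\phi_T|^{m/(m-1)}$. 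The pointwise identity I obtain is
$$
\phi_T^{-1/(m-1)}|\partial_t\phi_T|^{m/(m-1)} = \ell^{m/(m-1)}\,T^{-m/(m-1)}\,\mu_T(x)\,\eta(t/T)^{\ell-m/(m-1)}|\eta'(t/T)|^{m/(m-1)}.
$$

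Finally, by Fubini the integral factors as a product of a spatial and a temporal integral. For the spatial part, since $\mu_T\leq 1$ is supported in $\{|x|\leq\sqrt{2}\,T^\theta\}$, one has $\int_{\mathbb{R}^N}\mu_T(x)\,dx\leq C T^{N\theta}$. For the temporal part, the change of variables $s=t/T$ yields
$$
\int_0^\infty \eta(t/T)^{\ell-m/(m-1)}|\eta'(t/T)|^{m/(m-1)}\,dt = T\int_0^1\eta(s)^{\ell-m/(m-1)}|\eta'(s)|^{m/(m-1)}\,ds \leq CT,
$$
where the finiteness of the $s$-integral is guaranteed by $\eta\in C^\infty([0,\infty))$, $\operatorname{supp}(\eta)\subset(0,1)$ and the nonnegative exponent on $\eta$. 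Multiplying the three factors $T^{-m/(m-1)}\cdot T^{N\theta}\cdot T$ produces the claimed bound $CT^{1+N\theta-m/(m-1)}$. There is no real obstacle here; the only subtle point is the bookkeeping that identifies $\ell\geq m/(m-1)$ as the exact condition needed to make the integrand integrable after the cancellation.
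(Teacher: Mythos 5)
Your proposal is correct and follows essentially the same route as the paper: factor the integrand via $\phi_T=\lambda_T\mu_T$, apply Fubini, bound the spatial integral by $CT^{N\theta}$ via scaling, and bound the temporal integral by $CT^{1-\frac{m}{m-1}}$ via the substitution $s=t/T$, with the hypothesis $\ell\geq \frac{m}{m-1}$ guaranteeing a nonnegative exponent on $\eta$. Your explicit remark on where that hypothesis enters is a point the paper leaves implicit, but the argument is the same.
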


\begin{proof}
Let $T>0$. By the definition of the function $\phi_T$, one has
\begin{equation}\label{es1}
\int_{Q} \phi_T^{\frac{-1}{m-1}} |\partial_t \phi_T|^{\frac{m}{m-1}}\,dx\,dt=\left(\int_0^T \lambda_T^{\frac{-1}{m-1}}
|\lambda_T'|^{\frac{m}{m-1}}\,dt\right) \left(\int_{\mathbb{R}^N} \mu_T(x)\,dx\right).
\end{equation}
Using the change of variable $x=T^\theta y$, one obtains
\begin{equation}\label{es2}
\int_{\mathbb{R}^N} \mu_T(x)\,dx=T^{N\theta}
\int_{0\leq |y|^2\leq 2} \xi(|y|^2)^\ell\,dy=C T^{N\theta}.
\end{equation}
By the change of variable $t=T s$, one gets
\begin{eqnarray*}
\int_0^T \lambda_T^{\frac{-1}{m-1}}
|\lambda_T'|^{\frac{m}{m-1}}\,dt&=&C T^{\frac{-m}{m-1}} \int_0^T \eta\left(\frac{t}{T}\right)^{\ell-\frac{m}{m-1}} \eta'\left(\frac{t}{T}\right)^{\frac{m}{m-1}}\,dt\\
&=&C T^{1-\frac{m}{m-1}}\int_0^1 \eta(s)^{\ell-\frac{m}{m-1}} \eta'(s)^{\frac{m}{m-1}}\,ds,
\end{eqnarray*}
hence
\begin{equation}\label{es3}
\int_0^T \lambda_T^{\frac{-1}{m-1}}
|\lambda_T'|^{\frac{m}{m-1}}\,dt=C T^{1-\frac{m}{m-1}}.
\end{equation}
Finally, \eqref{es1}--\eqref{es3} yield the desired estimate.
\end{proof}

\begin{lem}\label{L3}
Let $m>1$ and $\ell\geq \frac{2m}{m-1}$. There holds
$$
\int_{Q} \phi_T^{\frac{-1}{m-1}} |\Delta \phi_T|^{\frac{m}{m-1}}\,dx\,dt\leq C T^{1+N\theta-\frac{2m\theta}{m-1}},\quad T>0.
$$
\end{lem}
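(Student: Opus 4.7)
The plan is to exploit the product structure $\phi_T(t,x) = \lambda_T(t)\mu_T(x)$ exactly as in Lemma \ref{L1}, replacing the time derivative by the spatial Laplacian. Since $\lambda_T$ depends only on $t$, we have $\Delta \phi_T = \lambda_T(t)\,\Delta\mu_T(x)$, so the integrand factors as
\[
\phi_T^{\frac{-1}{m-1}}|\Delta\phi_T|^{\frac{m}{m-1}}
= \lambda_T(t)\,\mu_T(x)^{\frac{-1}{m-1}}|\Delta\mu_T(x)|^{\frac{m}{m-1}},
\]
after using $-\frac{1}{m-1}+\frac{m}{m-1}=1$. Fubini then splits the integral over $Q$ into a product of a time integral and a space integral.

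For the time integral, I would just observe that $\int_0^T \lambda_T(t)\,dt = \int_0^T \eta(t/T)^\ell\,dt = T\int_0^1 \eta(s)^\ell\,ds = CT$, which accounts for the $T^1$ factor in the estimate.

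For the space integral, I would change variables $x = T^\theta y$. A direct computation using $\mu_T(x)=\xi(|x|^2/T^{2\theta})^\ell$ gives
\[
\Delta_x \mu_T(x) = T^{-2\theta}\,\bigl[\Delta_y(\xi(|y|^2)^\ell)\bigr]_{y=x/T^\theta},
\]
so after the substitution (which contributes the Jacobian factor $T^{N\theta}$) the space integral equals
\[
T^{N\theta - \frac{2m\theta}{m-1}}\int_{\mathbb{R}^N} \xi(|y|^2)^{\frac{-\ell}{m-1}}\,\bigl|\Delta_y(\xi(|y|^2)^\ell)\bigr|^{\frac{m}{m-1}}dy.
\]
Combined with the time factor $CT$, this yields the exponent $1+N\theta - \frac{2m\theta}{m-1}$ claimed.

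The only thing to check is that the remaining $y$-integral is finite and independent of $T$. The potential difficulty is integrability near the transition region where $\xi$ vanishes: computing $\Delta_y(\xi^\ell)$ produces terms with factors $\ell\xi^{\ell-1}$ and $\ell(\ell-1)\xi^{\ell-2}$, so $|\Delta_y\xi^\ell|$ is dominated by $C\,\xi^{\ell-2}$ near $\{\xi=0\}$ (with $C$ depending on $\xi',\xi''$, which are bounded since $\xi\in C^\infty$ and compactly supported in the radial variable). Consequently the integrand is controlled by a constant multiple of $\xi^{-\ell/(m-1)}\xi^{(\ell-2)m/(m-1)} = \xi^{\ell-\frac{2m}{m-1}}$, whose exponent is nonnegative precisely under the hypothesis $\ell\geq \frac{2m}{m-1}$. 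Since the support of $\xi(|y|^2)$ is contained in the bounded set $\{|y|\leq \sqrt{2}\}$, the $y$-integral is finite, and combining all factors gives the stated bound.
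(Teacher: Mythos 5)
Your proposal is correct and follows essentially the same route as the paper: factor the integrand as $\lambda_T(t)\,\mu_T(x)^{\frac{-1}{m-1}}|\Delta\mu_T(x)|^{\frac{m}{m-1}}$, compute the time integral as $CT$, rescale $x=T^\theta y$ to extract $T^{N\theta-\frac{2m\theta}{m-1}}$, and use $\ell\geq\frac{2m}{m-1}$ to keep the remaining $y$-integral finite. The only cosmetic difference is that you absorb both terms of $\Delta_y(\xi^\ell)$ into a single bound $C\xi^{\ell-2}$, whereas the paper keeps the $\xi^{\ell-\frac{2m}{m-1}}|y|^{\frac{2m}{m-1}}$ and $\xi^{\ell-\frac{m}{m-1}}$ contributions separate; both give the same conclusion.
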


\begin{proof}
Let $T>0$. There holds
\begin{equation}\label{esss1}
\int_{Q} \phi_T^{\frac{-1}{m-1}} |\Delta \phi_T|^{\frac{m}{m-1}}\,dx\,dt=\left(\int_0^T \lambda_T\,dt\right) \left(\int_{\mathbb{R}^N} \mu_T^{\frac{-1}{m-1}} |\Delta \mu_T|^{\frac{m}{m-1}}\,dx\right).
\end{equation}
Using the change of variable $t=T s$, one obtains
\begin{equation}\label{esss2}
\int_0^T \lambda_T\,dt=T \int_0^1 \eta(s)^\ell \,ds =CT.
\end{equation}
By the change of variable $x=T^\theta y$, one has
\begin{eqnarray*}
&\int_{\mathbb{R}^N} \mu_T^{\frac{-1}{m-1}} |\Delta \mu_T|^{\frac{m}{m-1}}\,dx& \\
&\leq  C T^{N \theta-\frac{2m\theta}{m-1}}\left(\int_{1\leq |y|^2\leq 2}\xi(|y|^2)^{l-\frac{2m}{m-1}}|y|^{\frac{2m}{m-1}}dy+\int_{1\leq |y|^2\leq 2}\xi(|y|^2)^{l-\frac{m}{m-1}}dy\right),
\end{eqnarray*}
hence
\begin{equation}\label{esss3}
\int_{\mathbb{R}^N} \mu_T^{\frac{-1}{m-1}} |\Delta \mu_T|^{\frac{m}{m-1}}\,dx \leq  C T^{N\theta-\frac{2m\theta}{m-1}}.
\end{equation}
Finally, \eqref{esss1}--\eqref{esss3} yield the desired estimate.
\end{proof}

\section{A general nonexistence result}\label{sec3}

In this section, we prove the general nonexistence result given by Theorem \ref{T1}, as well as Corollary \ref{cor1}.

\begin{proof}[Proof of Theorem \ref{T1}]
Suppose that $(u,\partial^{k-1}_{t} u)\in L^p_{loc}(Q)\times L^q_{loc}(Q)$  is a  weak solution to problem \eqref{P}. For $T>0$, taking $\varphi=\phi_T$ in \eqref{ws}, one obtains
\begin{equation}\label{wsss}
\begin{aligned}
& \int_Q |u|^p\phi_T\,dx\,dt +\int_Q  |\partial^{k-1}_{t} u|^q\phi_T\,dx\,dt + \int_Q w(t,x) \phi_T\,dx\,dt\\
&\leq  \int_Q |\partial^{k-1}_t u|\,|\partial_t \phi_T|\,dx\,dt+\int_Q |u| |\Delta \phi_T|\,dx\,dt.
\end{aligned}
\end{equation}
Using the $\varepsilon$-Young inequality with $\varepsilon=1$, one gets
\begin{equation}\label{AA1}
\int_{Q} |u||\Delta \phi_T|\,dx\,dt \leq  \int_{Q} |u|^p\phi_T\,dx\,dt+C \int_{Q} \phi_T^{\frac{-1}{p-1}} |\Delta \phi_T|^{\frac{p}{p-1}}\,dx\,dt.
\end{equation}
Similarly, one has
\begin{equation}\label{AA2}
\int_Q |\partial^{k-1}_t u|\,|\partial_t \phi_T|\,dx\,dt \leq  \int_{Q} |\partial^{k-1}_t u|^q\phi_T\,dx\,dt+C \int_{Q} \phi_T^{\frac{-1}{q-1}} |\partial_t \phi_T|^{\frac{q}{q-1}}\,dx\,dt.
\end{equation}
It follows from \eqref{wsss}--\eqref{AA2} that
$$
\int_{Q} w(t,x)\phi_T\,dx\,dt\leq C\left(I_1(T)+I_2(T)\right),
$$
where
$$
I_1(T)=\int_{Q} \phi_T^{\frac{-1}{p-1}} |\Delta \phi_T|^{\frac{p}{p-1}}\,dx\,dt
$$
and
$$
I_2(T)=\int_{Q} \phi_T^{\frac{-1}{q-1}} |\partial_t \phi_T|^{\frac{q}{q-1}}\,dx\,dt.
$$
Taking $\ell= \max\left\{\frac{q}{q-1}, \frac{2p}{p-1}\right\}$ and using Lemma \ref{L3} with $m=p$, one obtains
$$
I_1(T)\leq CT^{1+N\theta-\frac{2p\theta}{p-1}}.
$$
Using Lemma \ref{L1} with $m=q$, one obtains
$$
I_2(T) \leq C T^{1+N\theta -\frac{q}{q-1}}.
$$
Hence, we deduce that
$$
\int_{Q} w(t,x)\phi_T\,dx\,dt\leq C\left(T^{1+N\theta-\frac{2p\theta}{p-1}}+T^{1+N\theta -\frac{q}{q-1}}\right).
$$
Taking $\theta=\frac{(p-1)q}{2(q-1)p}$, one has
$$
1+N\theta-\frac{2p\theta}{p-1}=1+N\theta -\frac{q}{q-1}=1+\frac{q}{q-1}\left[\frac{N(p-1)}{2p}-1\right]
$$
and
\begin{equation}\label{MB}
\int_{Q} w(t,x)\phi_T\,dx\,dt\leq C T^{1+\frac{q}{q-1}\left[\frac{N(p-1)}{2p}-1\right]}.
\end{equation}
On the other hand, by the definition of the function $\phi_T$,  since $w\geq 0$, one gets
\begin{equation}\label{attention}
\int_{Q} w(t,x)\phi_T\,dx\,dt\geq \int_{c_1T}^{c_2T}\int_{0<|x|< T^{\frac{(p-1)q}{2(q-1)p}}} w(t,x)\,dx\,dt.
\end{equation}
It follows from \eqref{MB} and \eqref{attention} that
$$
T^{\frac{q}{q-1}\left[1-\frac{N(p-1)}{2p}\right]-1} \int_{c_1T}^{c_2T}\int_{0<|x|< T^{\frac{(p-1)q}{2(q-1)p}}} w(t,x)\,dx\,dt \leq C,
$$
which contradicts \eqref{gbl}. This proves Theorem \ref{T1}.
\end{proof}

\begin{proof}[Proof of Corollary \ref{cor1}]
Let $T>0$ be large enough. One has
\begin{eqnarray*}
\int_{c_1T}^{c_2T} \int_{0< |x|< T^{\frac{q(p-1)}{2p(q-1)}}} w(t,x)\,dx\,dt&=&
\left(\int_{c_1T}^{c_2T} f(t)\,dt\right)\left(\int_{0< |x|< T^{\frac{q(p-1)}{2p(q-1)}}} g(x)\,dx\right)\\
&\geq & \left(\int_{0< |x|< 1} g(x)\,dx\right) \left(\int_{c_1T}^{c_2T} f(t)\,dt\right)\\
&=& C \int_{c_1T}^{c_2T} f(t)\,dt.
\end{eqnarray*}
Hence, using \eqref{blu}, one deduces that \eqref{gbl} is satisfied. Therefore, the result follows from Theorem \ref{T1}.
\end{proof}

\section{First and second critical exponents}\label{sec4}

We first prove the Fujita-type result given by Theorem \ref{T2}.

\begin{proof}[Proof of Theorem \ref{T2}]
Part (I)  follows immediately from Corollary \ref{cor1} with $f\equiv 1$.\\
\smallskip
(II)  It is well-known that the elliptic equation
\begin{equation}\label{se}
-\Delta u= u^p+g(x),\quad x\in \mathbb{R}^N,
\end{equation}
where $N\geq 3$ and $p>\frac{N}{N-2}$, admits positive solutions for some $g> 0$ (see e.g.  \cite{BP}). Clearly,  if $u$ is a  positive solution to \eqref{se}, then it is a global positive solution to \eqref{P} with $w=g(x)$ and suitable initial data. This completes the proof of Theorem \ref{T2}.
\end{proof}

Next, we prove Theorem \ref{T3}, which provides the second critical exponent for problem \eqref{P}.

\begin{proof}[Proof of Theorem \ref{T3}]
(I) Let $w=g(x)\in \mathbb{I}_a$. For $T$ large enough, one has
\begin{eqnarray*}
\int_{0< |x|< T^{\frac{q(p-1)}{2p(q-1)}}} g(x)\,dx &\geq & \int_{\frac{1}{2}T^{\frac{q(p-1)}{2p(q-1)}}< |x|< T^{\frac{q(p-1)}{2p(q-1)}}} g(x)\,dx\\
&\geq & C \int_{\frac{1}{2}T^{\frac{q(p-1)}{2p(q-1)}}< |x|< T^{\frac{q(p-1)}{2p(q-1)}}} |x|^{-a}\,dx\\
&\geq & C T^{\frac{q(p-1)(N-a)}{2p(q-1)}}.
\end{eqnarray*}
Hence, for any $0<c_1<c_2<1$, one obtains
\begin{eqnarray*}
T^{\frac{q}{q-1}\left[1-\frac{N(p-1)}{2p}\right]-1} \int_{c_1T}^{c_2T} \int_{0< |x|< T^{\frac{q(p-1)}{2p(q-1)}}} w(t,x)\,dx\,dt&=&  C
T^{\frac{q}{q-1}\left[1-\frac{N(p-1)}{2p}\right]}  \int_{0< |x|< T^{\frac{q(p-1)}{2p(q-1)}}} g(x)\,dx\\
&\geq & CT^{\frac{q}{q-1} \left(1-\frac{a}{a^*}\right)}.
\end{eqnarray*}
Since $a<a^*$, \eqref{gbl} is satisfied, which yields the desired result. \\
\smallskip (II) Let $a^*\leq a<N$. We take
$$
u(x)=\varepsilon (1+|x|^2)^{-\frac{\delta}{2}},\quad  x\in \mathbb{R}^N,
$$
where
\begin{equation}\label{cd}
a-2\leq \delta<N-2 \quad\mbox{and}\quad 0<\varepsilon<[\delta (N-\delta-2)]^{\frac{1}{p-1}}.
\end{equation}
One can show easily that
$$
-\Delta u =u^p +g(x),\quad x\in \mathbb{R}^N,
$$
where
$$
g(x)=\varepsilon \delta \left(N+(N-\delta-2)|x|^2\right) (1+|x|^2)^{-\frac{\delta}{2}-2}-\varepsilon^p (1+|x|^2)^{-\frac{\delta p}{2}}.
$$
Using \eqref{cd}, for all $x\in \mathbb{R}^N$, one obtains
\begin{eqnarray*}
g(x)&\geq & \varepsilon \delta \min\{N,N-\delta-2\} (1+|x|^2)^{-\frac{\delta}{2}-1}-\varepsilon^p (1+|x|^2)^{-\frac{\delta p}{2}}\\
&=& \varepsilon \delta  (N-\delta-2) (1+|x|^2)^{-\frac{\delta}{2}-1}-\varepsilon^p (1+|x|^2)^{-\frac{\delta p}{2}}\\
&=& \varepsilon (1+|x|^2)^{-\frac{\delta}{2}-1}\left( \delta  (N-\delta-2) -\varepsilon^{p-1} (1+|x|^2)^{-\frac{\delta p}{2}+\frac{\delta}{2}+1}\right)\\
&\geq & \varepsilon (1+|x|^2)^{-\frac{\delta}{2}-1} \left(\delta  (N-\delta-2) -\varepsilon^{p-1} \right)\\
&>&0.
\end{eqnarray*}
On the other hand, using \eqref{cd}, for $|x|$ large, one has
\begin{eqnarray*}
g(x)&\leq & \varepsilon \delta \left(N+(N-\delta-2)|x|^2\right) (1+|x|^2)^{-\frac{\delta}{2}-2}\\
&\leq & \varepsilon \delta \max\{N,N-\delta-2\} (1+|x|^2)^{-\frac{\delta}{2}-1}\\
&=& \varepsilon \delta N (1+|x|^2)^{-\frac{\delta}{2}-1}\\
&\leq & \varepsilon \delta N  (1+|x|^2)^{-\frac{a}{2}}\\
&\leq & C |x|^{-a}.
\end{eqnarray*}
Hence, $u$ is a global positive solution to problem \eqref{P} with $w=g(x)\in \mathbb{J}_a$ and suitable initial data. This completes the proof of Theorem \ref{T3}.
\end{proof}

\noindent{\bf Acknowledgements.} The first and third authors extend their appreciation to the Deanship of Scientific Research at King Saud University for funding this work through research group No. RGP--237. The second author is supported by Natural Science Foundation of Zhejiang Province(LY18A010008), Postdoctoral Research Foundation of China(2017M620128, 2018T110332).

\noindent Mohamed Jleli\\
Department of Mathematics, College of Science, King Saud University, P.O. Box 2455, Riyadh, 11451, Saudi Arabia \\
E-mail: jleli@ksu.edu.sa\\

\noindent Ning-An Lai\\
Institute of Nonlinear Analysis and Department of Mathematics, Lishui University, Lishui 323000, China\\
School of Mathematical Sciences, Fudan University, Shanghai 200433, China\\
E-mail: ninganlai@lsu.edu.cn\\

\noindent Bessem Samet\\
Department of Mathematics, College of Science, King Saud University, P.O. Box 2455, Riyadh, 11451, Saudi Arabia \\
E-mail: bsamet@ksu.edu.sa

\end{document}